\newtheorem{theorem}{Theorem}[section]
\newtheorem{lemma}[theorem]{Lemma}
\newtheorem{proposition}[theorem]{Proposition}
\theoremstyle{definition}
\theoremstyle{remark}
\newtheorem{remark}[theorem]{Remark}
\numberwithin{equation}{section}
\begin{document}

\title{On the refined shrinking target property of rotations}
\author{Dong Han KIM}
\address{Department of Mathematics Education, Dongguk University -- Seoul, Seoul 100-715, Korea}
\email{kim2010@dongguk.edu}


\begin{abstract}
We discuss the shrinking target property of irrational rotations.
We obtain the condition of an irrational $\theta$ and monotone increasing $\varphi(n)$ such that 
$$ 
\liminf_{n \to \infty} n \varphi (n) \| n\theta - s \| = 0 \text{ for almost every } s.
$$
We also consider the class of irrationals for which the limit inferior is 0 for every monotone increasing $\varphi(n)$ such that $\sum_n 1/(n\varphi(n))$ diverges.
\end{abstract}

\keywords{metric inhomogeneous diophantine approximation; irrational rotation; shrinking target property}
\subjclass[2010]{11J83, 11K60, 37E10}

\maketitle

\section{Introduction}

The inhomogeneous Diophantine approximation theorem by Minkowski\cite{Min} state that
for an irrational number $\theta$,
if $s$ is not of the form $B \theta - A $ for integers $A$ and $B$, then there are infinitely many integer $n$ such that
\begin{equation}\label{Minkowski} \| n \theta - s \| < \frac1{4|n|} \end{equation}
where $\| t \|$, $t \in \mathbb R$ is the distance to its nearest integer.

%

An irrational $\theta$ is said to be of bounded type if there exist a $C >0$ such that
$n \| n \theta \| > C$ for all positive integer $n$.
Kurzweil\cite{Kurzweil} showed that,
if and only if the irrational $\theta$ is of bounded type,
then for almost every $s$ and a monotone decreasing positive $\psi(n)$ with $\sum_n \psi(n) = \infty$,
\begin{equation}\label{Kurzweil}
\| n \theta - s \| <  \psi(n) \quad \text{ for infinitely many } n \in \mathbb N
\end{equation}
(See also \cite{Fa} for the higher dimensional case).
Note that the first Borel-Cantelli lemma implies that if $\sum_n \psi(n) < \infty$, then   
for almost every $s \in \mathbb R$ we have
$ \| n \theta - s \| <  \psi(n) $ holds for only finitely many $n$'s.
The refined Kurzweil type inhomogeneous Diophantine approximation has been studied in \cite{BoCh, Chaika, ChCo}.
A sequence $\psi(n)$ of positive numbers is called a Khinchin sequence\cite{ChCo} if $n \psi(n)$ is monotone decreasing and $\sum_n \psi(n) = \infty$.
In this article, we study the condition for the irrational $\theta$ and the Khinchin sequence $\psi(n)$ such that \eqref{Kurzweil} holds for almost every $s$.


The inhomogeneous Diophantine approximation is related to the shrinking target property (see \cite{Fa, IndMath2007})
and the dynamical Borel-Cantelli lemma (see \cite{CK,Dol,DCDS2007}) of the irrational rotations.
For a measure preserving transformation $T$ on $(X,\mu)$, it is proved\cite{BGI} that for $\mu $-almost all $x\in X$ one has
\begin{equation*}
\liminf_{n\geq 1}n^{\beta }\cdot d(T^{n}x,y)=\infty \text{ with }\beta > \frac{1}{\underbar d_{\mu }(y)},
\end{equation*}
where $\underbar d_{\mu }(y) = \liminf_{r \to 0} \log \mu(B(y,r))/\log r$ is the lower local dimension at $y$.
For a piecewise expanding map on an interval\cite{DCDS2007} or some hyperbolic map(\cite{CK}, \cite{Dol}) it is known that
for $\mu $-almost all $%
x\in X$ one has
\begin{equation*}
\liminf_{n\geq 1}n^{\beta }\cdot d(T^{n}x,y)= 0 \text{ with }\beta = \frac{1}{\underbar d_{\mu }(y)}
\end{equation*}

We assume that $T$ is the rotation by an irrational $\theta$ on the unit interval.
Then by \eqref{Minkowski} and Cassels' lemma\cite[Lemma 2.1]{Harman} we have
\begin{equation}\label{non}
\liminf_{n \to \infty} n \cdot \| n \theta - s \| = 0 \quad \text{ almost every } s \in \mathbb R. 
\end{equation}
(See also \cite{Non2007}).
In this paper, we consider the condition of the irrational $\theta$ and the monotone increasing $\varphi(n)$ for which
\begin{equation}\label{liminf}
\liminf_{n \to \infty} n \varphi (n) \cdot \| n \theta - s \| = 0 \quad \text{ almost every } s \in \mathbb R.
\end{equation}
For the monotone increasing $\varphi(n)$, If $\sum_n 1/(n\varphi(n)) = \infty$, then $1/(n\varphi(n))$ is a Khinchin sequence.

In Sectiion~\ref{sec2}, we state the condition of the irrational $\theta$ and the monotone increasing $\varphi(n)$ 
for which \eqref{liminf} holds (Theorem~\ref{thm}).
In Sectiion~\ref{sec3}, we give some sufficient and necessary conditions of the irrational $\theta$ that for any monotone increasing $\varphi(n)$ with  $\sum_n 1/(n\varphi(n)) = \infty$, \eqref{liminf} holds (Theorem~\ref{conditions}).
The proof of Theorem~\ref{thm} is given in Section~\ref{proof}.
The analogous result for the formal Laurent series case was studied in \cite{KimNakada}.

\section{Main Theorem}\label{sec2}

For an irrational number $0 < \theta < 1$, we have the continued fraction expansion with partial quotients $a_k$, $k \ge 1$.
Let $p_k / q_k$ be the $k$-th convergents 
with $p_0 = 0$, $q_0 = 1$. 
Then we have $q_{k+1}=a_{k+1}q_{k}+q_{k-1}$, thus
\begin{equation}\label{qkbound} q_{k+1} \ge 2 q_{k-1} \text{  for all } k\ge 1. \end{equation}

We have the main theorem of the paper as follows:
\begin{theorem}\label{thm}
Let $\varphi(n)$ be a monotone increasing positive function.
For a given irrational $\theta$  we have
$$ \liminf_{n \to \infty} n \varphi (n) \cdot \| n \theta - s \| = 0
\quad \text{ almost every } s $$
if and only if the principal convergent's denominator $q_k$ of the irrational $\theta$ satisfies 
\begin{equation}\label{condition}
\sum_{k=0}^\infty \frac{\log \left( \min ( \varphi(q_k), q_{k+1}/q_k) \right) }{\varphi(q_k)} = \infty.
\end{equation}
\end{theorem}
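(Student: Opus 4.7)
The plan is to study the sets $A_n = \{s \in [0,1) : \|n\theta - s\| < 1/(n\varphi(n))\}$, each of measure $\mu(A_n) = 2/(n\varphi(n))$. The key observation is that for each $\epsilon > 0$ the set $E_\epsilon = \{s : \exists^\infty n,\; n\varphi(n)\|n\theta - s\| < \epsilon\}$ is $T_\theta$-invariant modulo null sets (this uses the monotonicity of $n\varphi(n)$: if $T_\theta s \in E_\epsilon$ via the index $n$, then reindexing by $m = n-1$ gives $(m+1)\varphi(m+1)\|m\theta - s\| < \epsilon$, and monotonicity forces $m\varphi(m)\|m\theta - s\| < \epsilon$, so $s \in E_\epsilon$). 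Combined with ergodicity of the rotation, and the observation that condition (\ref{condition}) is preserved up to multiplicative constants under the rescaling $\varphi \mapsto \varphi/\epsilon$, this reduces the theorem to proving
\[ \mu\bigl(\limsup_n A_n\bigr) = 1 \quad \Longleftrightarrow \quad \text{condition (\ref{condition}) holds.} \]

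I would then group indices into consecutive blocks $I_k = [q_k, q_{k+1}) \cap \mathbb N$ and study the block unions $B_k := \bigcup_{n \in I_k} A_n$, so that $\limsup_n A_n = \limsup_k B_k$. The heart of the argument is the two-sided estimate
\[ \mu(B_k) \asymp \frac{\log \min(\varphi(q_k),\, q_{k+1}/q_k)}{\varphi(q_k)} \]
up to absolute constants. To prove it, recall from the three-distance theorem that the orbit $\{n\theta \bmod 1\}_{n < q_{k+1}}$ has minimum pairwise spacing $\|q_k\theta\| \asymp 1/q_{k+1}$. Define the threshold $N_k \in I_k$ by $N_k\varphi(N_k) \asymp q_{k+1}$ (or $N_k := q_k - 1$ if no such index exists). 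For $n > N_k$ in $I_k$ the radius $1/(n\varphi(n))$ is below half the orbit spacing, so the $A_n$ are pairwise disjoint and contribute $\asymp \sum_{N_k < n < q_{k+1}} 1/(n\varphi(n))$. For $n \in I_k$ with $n \le N_k$---present only in the overlap regime $\varphi(q_k) < q_{k+1}/q_k$---one uses the identity $n\theta = q_k\theta + (n-q_k)\theta$ together with the fine three-distance structure at scale $1/q_{k+1}$ to control the overlapping union. Adding the two contributions yields the $\log\min$ numerator.

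Given the estimate, one direction is immediate: if (\ref{condition}) fails then $\sum_k \mu(B_k) < \infty$ and the first Borel--Cantelli lemma gives $\mu(\limsup B_k) = 0$. For the other direction I would apply the Kochen--Stone lemma
\[ \mu\bigl(\limsup_k B_k\bigr) \ge \limsup_{N \to \infty} \frac{\bigl(\sum_{k \le N}\mu(B_k)\bigr)^2}{\sum_{k,l \le N}\mu(B_k \cap B_l)} \]
combined with a quasi-independence bound $\mu(B_k \cap B_l) \le C\,\mu(B_k)\mu(B_l)$ for $k \ne l$. With the 0--1 law, this forces $\mu(\limsup B_k) = 1$.

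The main obstacle is the overlap case of the key estimate together with the quasi-independence bound. The overlap case is where the $\log \min$ structure of (\ref{condition}) actually emerges: naive summation of $\mu(A_n)$ over $I_k$ overcounts severely, so one must organize indices by dyadic radius and use the near-lattice structure of the orbit at scale $1/q_{k+1}$ to see that each dyadic band contributes a bounded amount. The quasi-independence rests on the scale separation between blocks: intervals composing $B_l$ have length $O(1/q_l)$, far finer than the macroscopic structure of $B_k$ when $l > k$, so uniform distribution of the rotation orbit should deliver the product bound, but quantifying this requires effective equidistribution estimates tied to the continued fraction of $\theta$.
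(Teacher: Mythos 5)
Your proposal has the same high-level skeleton as the paper: partition the indices into blocks indexed by the convergent denominators, prove the convergence direction by an upper bound on the measure of the block union plus the first Borel--Cantelli lemma, and prove the divergence direction by a lower bound plus a quasi-independence estimate fed into a second Borel--Cantelli type lemma (you propose Kochen--Stone where the paper uses Sprind\v{z}uk's quantitative version). The explicit 0--1 law reduction at the start is a nice addition: the paper never states it, and it is needed to pass from ``infinitely often with $n\varphi(n)\|n\theta-s\|<1$'' to ``$\liminf =0$''. But there are two places where the proposal papers over the real content.

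First, the claimed two-sided estimate $\mu(B_k)\asymp \log\min(\varphi(q_k),q_{k+1}/q_k)/\varphi(q_k)$ is not what the natural argument gives. Your disjointness argument for the indices $n>N_k$ yields a lower bound of the form $\log(q_{k+1}/N_k)/\varphi(q_{k+1})$, i.e.\ with $\varphi(q_{k+1})$, not $\varphi(q_k)$, in the denominator; since $\varphi(q_{k+1})$ can be vastly larger than $\varphi(q_k)$, this is a genuinely weaker bound. The paper faces exactly this mismatch and resolves it not by sharpening the block estimate but by an auxiliary reduction (Proposition~\ref{qk}) showing that the series with $\varphi(q_k)$ and with $\varphi(q_{k+1})$ in the denominator converge or diverge together. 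Without that proposition, or something equivalent, the divergence direction does not close.

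Second, and more seriously, the quasi-independence bound $\mu(B_k\cap B_\ell)\le C\,\mu(B_k)\,\mu(B_\ell)$ is asserted for the raw unions $B_k=\bigcup_{n\in I_k}A_n$, but this is precisely the step where a naive argument breaks. The balls $A_n$, $n\in I_k$, overlap heavily when $\varphi(q_k)<q_{k+1}/q_k$, and the natural route via $\mu(B_k\cap B_\ell)\le\sum_{n\in I_k}\mu(A_n\cap B_\ell)$ produces the sum $\sum_{n\in I_k}\mu(A_n)$ on the right, which overshoots $\mu(B_k)$ by an unbounded factor in the overlap regime; so quasi-independence for $B_k$ is not at all automatic from equidistribution. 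The Denjoy--Koksma inequality, which is the quantitative equidistribution input available here, controls sums of the indicator of a single interval over an orbit segment of length exactly $q_k$, so it applies cleanly only to a union of intervals of \emph{equal} radius centered at the points of an arithmetic progression of step~$1$ and length~$q_k$. The paper manufactures exactly this structure: it replaces the variable-radius union by the sub-collections $G_{k,i}$, each consisting of $q_k$ equal-radius balls (radius shrunk by a fixed constant so they stay inside the original $A_n$ and are pairwise disjoint), and then the Denjoy--Koksma estimate gives $\mu(G_{k,i}\cap I)\le\mu(G_{k,i})\mu(I)+\frac{3}{q_k}\mu(G_{k,i})$ for any interval $I$, from which quasi-independence with a geometrically decaying error follows. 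This surrogate construction, together with the lemma verifying $\sum_k\mu(G_k)=\infty$ under \eqref{condition}, is the real engine of the proof, and it is the piece your proposal leaves as a black box under ``effective equidistribution estimates tied to the continued fraction of $\theta$''.
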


The proof of Theorem~\ref{thm} is given in Section~\ref{proof}.

\begin{remark}
(i) For any monotone increasing $\varphi(n)$ to $\infty$, choose an irrational $\theta$ such that $\varphi (q_k) > k^2$. Then  
$$\sum_{k=0}^\infty \frac{\log \left( \min ( \varphi(q_k), q_{k+1}/q_k) \right) }{\varphi(q_k)} \le \sum_{k=0}^\infty \frac{\log \varphi(q_k) }{\varphi(q_k)} < \sum_{k=0}^\infty \frac{2 \log k}{k^2} < \infty.$$  
Therefore, as already studied in \cite[Theorem 5]{BoCh}, 
for any monotone increasing function $\varphi(n)$ which goes to infinity, there exist an irrational $\theta$ such that 
$$ \liminf_{n \to \infty} n \varphi (n) \cdot \| n \theta - s \| = \infty
\quad \text{ almost every } s.$$

(ii) We also obtain \eqref{non}: if $\varphi(n)$ is bounded, then \eqref{condition} diverges for every $\theta$, thus for almost every $s \in \mathbb R$
$$ \liminf_{n \to \infty} n \cdot \| n \theta - s \| = 0.$$
\end{remark}

\begin{remark}
The condition $\sum_{n=1}^\infty 1/(n \varphi(n)) = \infty$ is implied by \eqref{condition}
since by defining $\varphi(x) = \varphi( \lfloor x \rfloor )$ and following proposition~\ref{qk},
\begin{equation*}
\begin{split}
\sum_{n=1}^\infty \frac{1}{n \varphi(n)} &\ge \int_1^\infty \frac{dx}{x\varphi(x)} = \int_0^\infty \frac{dt}{\varphi(e^t)}\\
&= \sum_{k=0}^\infty \left( \int_{\log q_k}^{\log q_{k+1}} \frac{dt}{\varphi(e^t)} \right) 
\ge \sum_{k=0}^\infty \frac{\log \left( q_{k+1}/q_k \right) }{\varphi(q_{k+1})} = \infty.
\end{split}\end{equation*} 
\end{remark}

\begin{proposition}\label{qk}
For a monotone increasing $\varphi(n) >0$ we have 
$$\sum_{k=0}^\infty \frac{\log \left( \min ( \varphi(q_k), q_{k+1}/q_k) \right) }{\varphi(q_k)} = \infty,$$ if and only if 
$$\sum_{k=0}^\infty \frac{\log \left( \min ( \varphi(q_k), q_{k+1}/q_k) \right) }{\varphi(q_{k+1})} = \infty.$$
\end{proposition}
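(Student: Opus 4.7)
My plan is to prove the two implications separately, exploiting the monotonicity of $\varphi$ in each direction.

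The direction from divergence of the sum with $\varphi(q_{k+1})$ to divergence of the sum with $\varphi(q_k)$ is immediate: since $\varphi$ is monotone increasing and $q_{k+1}>q_k$, one has $\varphi(q_{k+1})\ge\varphi(q_k)$, so the summand of the first series dominates that of the second termwise.

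For the converse, I would set $T_k:=\log\bigl(\min(\varphi(q_k),q_{k+1}/q_k)\bigr)$, assume $\sum_k T_k/\varphi(q_{k+1})<\infty$, and prove that $\sum_k T_k/\varphi(q_k)<\infty$. The key idea is to split the indices according to whether $\varphi$ roughly doubles in one step. Let $A=\{k:\varphi(q_{k+1})\le 2\varphi(q_k)\}$ and $B=\{k:\varphi(q_{k+1})>2\varphi(q_k)\}$. For $k\in A$ the bound $\varphi(q_{k+1})/\varphi(q_k)\le 2$ gives $T_k/\varphi(q_k)\le 2\,T_k/\varphi(q_{k+1})$, so the part of the first series over $A$ is at most twice the (finite) second series. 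For $k\in B$, enumerating $B=\{k_1<k_2<\dots\}$, the monotonicity of $\varphi$ combined with the doubling yields $\varphi(q_{k_j})\ge 2^{j-1}\varphi(q_{k_1})$, i.e.\ geometric growth along $B$. Using the key bound $T_k\le\log\varphi(q_k)$ together with the fact that $x\mapsto\log x/x$ is decreasing on $(e,\infty)$, the part of the first series over $B$ is then dominated by a convergent series of the form $\sum_j(j\log 2+c)/2^{j-1}$.

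The only obstacle, which is minor, is handling a finite initial segment where $\varphi(q_k)$ could be less than $e$ (so $\log x/x$ is not yet monotone) or even less than $1$ (so $T_k$ could be negative). Both issues are absorbed by discarding finitely many terms, which does not affect convergence of either series, so the main work is the split-and-compare argument just described.
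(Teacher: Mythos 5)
Your proof is correct and follows essentially the same approach as the paper: the easy direction is termwise domination via $\varphi(q_{k+1})\ge\varphi(q_k)$, and the hard direction (which you phrase as a contrapositive) rests on the identical decomposition according to whether $\varphi(q_{k+1})>2\varphi(q_k)$, using the forced geometric growth of $\varphi$ along the doubling set together with the monotonicity of $\log x/x$. You merely make explicit the convergence estimate over that set, which the paper asserts without detail.
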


\begin{proof}
It is enough to show \lq only if ' part.
Let 
$$\Lambda := \{ k \ge 0 : \varphi( q_{k+1} ) > 2 \varphi( q_{k} ) \} . $$
Then we have 
$$\sum_{k \in \Lambda }  \frac{ \log ( \min (\varphi (q_k), q_{k+1}/q_k) )}{ \varphi( q_k ) } \le \sum_{k \in \Lambda } \frac{\log \varphi (q_k) }{ \varphi( q_k ) } < \infty.$$
Therefore, if $ \sum_{k=0}^\infty \log ( \min (\varphi (q_k) , q_{k+1}/q_k) ) / \varphi( q_k ) = \infty $, then we have
$$ \sum_{k \notin \Lambda} \frac{ \log ( \min (\varphi (q_k), q_{k+1}/q_k) )}{ \varphi( q_k ) } = \infty, $$
thus
\begin{align*}
\sum_{k=0}^\infty \frac{ \log ( \min (\varphi (q_k), q_{k+1}/q_k) )}{ \varphi( q_{k+1} ) }
&\ge \sum_{k \notin \Lambda } \frac{ \log ( \min (\varphi (q_k), q_{k+1}/q_k) )}{ \varphi( q_{k+1} ) } \\
&\ge \sum_{k \notin \Lambda } \frac{ \log ( \min (\varphi (q_k), q_{k+1}/q_k) )}{ 2 \varphi( q_{k} ) }= \infty.
\end{align*}
\end{proof}

\section{Conditions for the Kurzweil type approximations}\label{sec3}

\begin{theorem}\label{conditions}
(i)
Let $\theta$ be an irrational with $q_k \le C^k$ for some constant $C$.
Then for all monotone increasing $\varphi(n) >0$ with $\sum_{n = 1}^\infty  1/(n \varphi (n)) = \infty$
\begin{equation*}
\liminf_{n \to \infty} n \varphi(n) \cdot \| n \theta - s \| = 0 \quad \text{  almost every } s .
\end{equation*}

(ii)
If there exists a constant $D$ such that $q_{k+1}/q_k \le D \log q_k$ for large $k$,
then for all monotone increasing $\varphi(n) >0$ with $\sum_{n = 1}^\infty 1/(n \varphi (n) ) = \infty$
\begin{equation*}
\liminf_{n \to \infty} n \varphi(n) \cdot \| n \theta - s \| = 0 \quad \text{  almost every } s .
\end{equation*}

\end{theorem}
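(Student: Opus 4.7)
The plan is to invoke Theorem~\ref{thm} and reduce both parts to verifying condition~\eqref{condition}. Setting $L_k := \log(q_{k+1}/q_k)$, we must show
\[
\sum_{k=0}^{\infty} \frac{\min(\log \varphi(q_k),\,L_k)}{\varphi(q_k)} = \infty.
\]
Without loss of generality assume $\varphi(n) \to \infty$; otherwise \eqref{non} already gives the conclusion. An elementary integral comparison on each block $[q_k, q_{k+1})$ using monotonicity of $\varphi$, together with $\sum_k 1/q_k < \infty$ (from~\eqref{qkbound}), first establishes the preliminary implication that $\sum_n 1/(n\varphi(n)) = \infty$ forces $\sum_k L_k/\varphi(q_k) = \infty$.

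\emph{Plan for (i).} The condition $q_k \le C^k$ is exactly $\sum_{k \le K} L_k = \log q_K \le K\log C$, so the $L_k$'s have Ces\`aro average at most $\log C$. Abel summation with $a_k = L_k$ against the non-increasing $b_k = 1/\varphi(q_k)$ yields $\sum_k L_k/\varphi(q_k) \le (\log C)\sum_k 1/\varphi(q_k)$; combined with the preliminary step, this forces $\sum_k 1/\varphi(q_k) = \infty$. Partition $\mathbb{N} = A \sqcup B$ with $A = \{k : L_k \le \log\varphi(q_k)\}$: on $A$ the integrand in \eqref{condition} equals $L_k/\varphi(q_k)$, and on $B$ it equals $\log\varphi(q_k)/\varphi(q_k)$. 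If $\sum_A L_k/\varphi(q_k) = \infty$ we are done; otherwise $\sum_B L_k/\varphi(q_k) = \infty$, and the remaining task is to show this forces $\sum_B \log\varphi(q_k)/\varphi(q_k) = \infty$, via the density bound $|\{k \le K : L_k > M\}| \le K\log C/M$, valid for every $M > 0$, together with the eventual inequality $\log\varphi(q_k) \ge 1$ which gives $\log\varphi(q_k)/\varphi(q_k) \ge 1/\varphi(q_k)$.

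\emph{Plan for (ii).} Writing $u_k = \log q_k$, the hypothesis reads $L_k \le \log D + \log u_k$. We argue by dichotomy. If $\varphi(q_k) \ge D u_k$ eventually, then $\log \varphi(q_k) \ge \log(D u_k) \ge L_k$, so $\min(\log\varphi(q_k), L_k) = L_k$ on the tail and the preliminary step yields the conclusion. Otherwise the set $K' := \{k : \varphi(q_k) < D u_k\}$ is infinite, and on $K'$ one has the pointwise bound $1/\varphi(q_k) > 1/(D u_k)$. Iterating $u_{k+1} - u_k \le \log D + \log u_k$ produces the growth bound $u_k = O(k \log k)$, so $\sum_k 1/u_k = \infty$; combining with the eventual inequality $\log\varphi(q_k)/\varphi(q_k) \ge 1/\varphi(q_k)$, a case split on whether $k \in K'$ lies in $A$ or $B$ shows that the $K'$-contribution alone drives~\eqref{condition} to infinity.

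\emph{Main obstacle.} The most delicate step is in (i): when $\sum_k 1/\varphi(q_k) = \infty$, the Abel bounds on subseries become inert (both sides infinite), and one must combine the density estimate on high-$L_k$ indices with the monotone growth of $\log\varphi(q_k)$ to transfer the divergence of $\sum_B L_k/\varphi(q_k)$ onto $\sum_B \log\varphi(q_k)/\varphi(q_k)$. The analogous concern in (ii) is ameliorated by the pointwise bound $L_k = O(\log\log q_k)$, which prevents the ratio $L_k/\log\varphi(q_k)$ from growing uncontrollably.
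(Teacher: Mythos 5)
The overall plan---verify condition~\eqref{condition} and apply Theorem~\ref{thm}---is correct, and your preliminary observation that $\sum_n 1/(n\varphi(n))=\infty$ forces $\sum_k L_k/\varphi(q_k)=\infty$ (with $L_k=\log(q_{k+1}/q_k)$) is a valid and useful reduction; likewise the Abel-summation deduction that $q_k\le C^k$ then gives $\sum_k 1/\varphi(q_k)=\infty$ is a clean and correct alternative to the paper's integral computation. However, both of your closing steps have genuine gaps, and in both cases the missing ingredient is the pairing inequality $L_{k-1}+L_k=\log(q_{k+1}/q_{k-1})\ge\log 2$ coming from \eqref{qkbound}, which the paper uses and you do not.

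For (i): you want to transfer divergence of $\sum_B L_k/\varphi(q_k)$ to $\sum_B\log\varphi(q_k)/\varphi(q_k)$ via a density estimate on $\{L_k>M\}$. This does not close by itself: a sparse $B$ with $L_k\gg\log\varphi(q_k)$ is a priori compatible with $\sum_B L_k/\varphi(q_k)=\infty$ and $\sum_B\log\varphi(q_k)/\varphi(q_k)<\infty$. What actually rules this out is the pairing bound. Indeed, once $\log\varphi(q_k)\ge\log 2$, the condition term satisfies $\min(\log\varphi(q_k),L_k)/\varphi(q_k)\ge\min(\log 2,L_k)/\varphi(q_k)$, and using $\min(\log 2,L_{2k-1})+\min(\log 2,L_{2k})\ge\min(\log 2,L_{2k-1}+L_{2k})\ge\log 2$ together with the monotonicity of $\varphi$, consecutive pairs combine to give at least $\log 2/\varphi(q_{2k})$; with $\sum_k 1/\varphi(q_{2k})=\infty$ (which your Abel step gives, since $1/\varphi(q_k)$ is nonincreasing), you would be done. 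Without this pairing step, the density argument you sketch is inert: $B$ being sparse only tells you that the high-$L_k$ indices are rare, not that the sum over $B$ of $\log\varphi/\varphi$ diverges.

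For (ii) the gap is more serious. Your dichotomy is ``$\varphi(q_k)\ge Du_k$ eventually'' versus ``$K'=\{k:\varphi(q_k)<Du_k\}$ is infinite,'' but mere infinitude of $K'$ is far too weak: $K'$ could be very sparse, in which case neither $\sum_{K'}1/u_k$ nor the $K'$-contribution to the condition sum need diverge. Moreover the preliminary step gives you $\sum_k L_k/\varphi(q_k)=\infty$, and you would need to determine whether the divergence lives on $K'$ or its complement; even once you isolate the divergent piece on $K'$, a lower bound for the condition terms there again requires controlling small $L_k$'s, which leads back to the pairing inequality and then to a nontrivial regrouping. The paper handles this by setting $\alpha_k=\log(q_{k+1}/q_k)/(q_{k+1}/q_k)$ and $\beta_k=\int_{q_k}^{q_{k+1}}dx/(x\varphi(x))$, observing that the condition term dominates $\min(\alpha_k,\beta_k)$, that $\sum_k\beta_k=\infty$, that $\beta_\ell/\log(q_{\ell+1}/q_\ell)$ is nonincreasing in $\ell$, and then performs a delicate block-summation argument across the indices where $\alpha_k\le\beta_k$, finally invoking $q_{k+1}/q_k\le D\log q_k$. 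That machinery is doing real work that your sketch does not replicate; the pointwise bound $L_k=O(\log\log q_k)$ that you cite is used in the paper, but only at the very last step, after the grouping has been carried out.
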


The condition (i) is already discussed in \cite[Theorem 6]{ChCo}.
(see also \cite[Remark 3]{Chaika})
The condition of (ii) is not implied by condition of (i).
Let $\theta$ be an irrational with partial quotients $a_k = k$, $k \ge 1$.
Then we have 
$$ \log q_k \ge \log a_1 + \dots + \log a_k = \log 1 + \dots + \log k \ge \int_1^k \log x dx = k \log k - k + 1$$
There is no constant $C$ such that $q_k \le C^k$.
However, we have for large $k$
$$\frac{q_{k+1}}{q_{k}} = a_{k+1} + \frac{q_{k-1}}{q_k} \le k+2 \le k \log k -k +1 \le \log q_k.$$ 

\begin{proof}
By \eqref{non} we may assume that $\varphi(n)$ goes to infinity as $n$ goes to infinity.
Let $\varphi(x) = \varphi(\lfloor x \rfloor)$ be defined on real $x \ge 1$.

(i) Suppose that $q_k \le C^k$, $C >1$ and $\varphi(n)$ be monotone increasing with $\sum_n \frac{1}{n\varphi(n)} = \infty$.
Then we have
\begin{equation*}\begin{split}
\sum_{k=0}^{\infty} \frac{1}{\varphi( q_{2k})}
&\ge \sum_{k=0}^{\infty} \frac{1}{\varphi( C^{2k})} 
=\sum_{k=0}^{\infty} \frac1{2\log C}\int_{2k\log C}^{2(k+1)\log C} \frac{dt}{\varphi(C^{2k})} \\
&\ge \frac1{2\log C} \sum_{k=0}^{\infty} \left( \int_{2k\log C}^{2(k+1) \log C} \frac{dt}{\varphi(e^t) } \right)
= \frac1{2\log C} \int_{0}^{\infty} \frac{dt}{\varphi(e^t) } \\
&= \frac1{2\log C} \int_{1}^{\infty} \frac{dx}{x \varphi(x) } \ge \frac1{2\log C} \sum_{n=2}^{\infty} \frac{1}{n \varphi(n)} = \infty.
\end{split}\end{equation*}
Since for large $k$ as to $\varphi(q_{2k-1}) \ge 2$, we have by \eqref{qkbound}
\begin{multline*}
\frac{\log \left( \min ( \varphi(q_{2k-1}), q_{2k}/q_{2k-1}) \right) }{\varphi(q_{2k-1})} + 
\frac{\log \left( \min ( \varphi(q_{2k}), q_{2k+1}/q_{2k}) \right) }{\varphi(q_{2k})} \\
\ge \frac{\log \left( \min ( \varphi(q_{2k-1}), q_{2k+1}/q_{2k-1}) \right) }{\varphi(q_{2k})} \ge  \frac{\log 2}{\varphi( q_{2k})},
 \end{multline*}
we have $\sum_k \log \left( \min ( \varphi(q_k), q_{k+1}/q_k) \right) / \varphi(q_k)$ diverges.
Hence, by Theorem~\ref{thm} we complete the first claim.

(ii)
Let 
$$\alpha_k := \frac{\log (q_{k+1}/q_k)}{q_{k+1}/q_k}, \qquad \beta_k : = \int_{q_k}^{q_{k+1}} \frac{dx}{x \varphi(x)}.$$
If $\varphi(q_k) \le q_{k+1} / q_k$, then since $(\log x)/x$ is decreasing for $x > e$, for some $M$ such that $\varphi(q_M) > e$, 
we have for $k \ge M$
\begin{equation}\label{casei}
\frac{\log \left( \min ( \varphi(q_k), q_{k+1}/q_k) \right) }{\varphi(q_k)} 
= \frac{\log \varphi(q_k) }{\varphi(q_k)} \ge \frac{\log (q_{k+1}/q_k)}{q_{k+1}/q_k} = \alpha_k. 
\end{equation}
When $\varphi(q_k) > q_{k+1} / q_k$, we have
\begin{equation}\label{caseii}
\frac{\log \left( \min ( \varphi(q_k), q_{k+1}/q_k) \right) }{\varphi(q_k)} 
= \frac{\log (q_{k+1}/q_k)}{\varphi(q_k)} \ge \int_{\log q_k}^{\log q_{k+1}} \frac{dt}{\varphi(e^t)}
 = \beta_k.
\end{equation}
Therefore, by \eqref{casei} and \eqref{caseii} we have
\begin{equation}\label{akbk}
\sum_{k=M}^\infty \frac{\log \left( \min ( \varphi(q_k), q_{k+1}/q_k) \right) }{\varphi(q_k)} \ge \sum_{k=M}^\infty \min(\alpha_k, \beta_k).
\end{equation}

Since $\varphi$ is monotone increasing, for $\ell < m$ we have
\begin{equation}\label{bk}
\begin{split}
\frac{\beta_{\ell}}{\log (q_{\ell+1}/q_\ell)} 
&= \frac{1}{\log q_{\ell+1} - \log q_{\ell}} \int_{\log q_\ell}^{\log q_{\ell+1}} \frac{dt}{\varphi(e^t)} \\
&\ge \frac{1}{\log q_{m+1}- \log q_m} \int_{\log q_m}^{\log q_{m+1}} \frac{dt}{\varphi(e^t)}
= \frac{\beta_m}{\log (q_{m +1}/q_m)}.
\end{split}
\end{equation}

By the assumption $\sum_n 1/(n \varphi(n)) = \infty$, we have
\begin{equation*}
\sum_{k=0}^\infty \beta_k = \sum_{k=0}^\infty \int_{q_k}^{q_{k+1}} \frac{dx}{x \varphi(x)} = \int_{1}^{\infty} \frac{dx}{x \varphi(x)} \ge \sum_{n=2}^\infty \frac{1}{n \varphi(n)} = \infty. \end{equation*} 
Therefore, if there is only finitely many $k$'s such that $\alpha_k \le \beta_k$,
then we have for a large $N$
$$\sum_{k=N}^\infty \min(\alpha_k, \beta_k) = \sum_{k=N}^\infty \beta_k = \infty.$$

Suppose that there are infinitely many $k$'s such that $\alpha_k \le \beta_k$.
Choose the subsequence $(k_i)_{i \ge 0}$ such that $\alpha_{k_i} \le \beta_{k_i}$ and $k_i \ge M$.
Then we have
\begin{equation*}\begin{split} 
\sum_{k= M}^\infty \min(\alpha_k, \beta_k) &\ge \sum_{i=1}^\infty \sum_{k=k_{i-1}+1}^{k_{i}} \min(\alpha_k, \beta_k) 
\ge \sum_{i=1}^\infty \left( \alpha_{k_i} + \sum_{k=k_{i-1}+1}^{k_{i}-1} \beta_{k} \right) \\
&\ge \sum_{i=1}^\infty \left( \alpha_{k_i} + \sum_{k=k_{i-1}+1}^{k_{i}-1} \frac{\log (q_{k+1}/q_k) }{\log (q_{k_i+1}/q_{k_i}) } \beta_{k_i} \right)\\
&\ge \sum_{i=1}^\infty \left( \alpha_{k_i} + \sum_{k=k_{i-1}+1}^{k_{i}-1} \frac{\log (q_{k+1}/q_k) }{\log (q_{k_i+1}/q_{k_i}) } \alpha_{k_i} \right) \\
&\ge \sum_{i=1}^\infty \sum_{k=k_{i-1}+1}^{k_{i}} \frac{\log (q_{k+1}/q_k) }{\log (q_{k_i+1}/q_{k_i}) } \alpha_{k_i}
= \sum_{i=1}^\infty \sum_{k=k_{i-1}+1}^{k_{i}} \frac{\log (q_{k+1}/q_k) }{q_{k_i+1}/q_{k_i}} \\
&= \sum_{i=1}^\infty \frac{\log q_{k_i +1} - \log q_{k_{i-1}+1}}{q_{k_i+1}/q_{k_i}} 
\ge \frac 1D \sum_{i=1}^\infty \frac{\log q_{k_i +1} - \log q_{k_{i-1}+1}}{\log q_{k_i}}.
\end{split}\end{equation*}
Since for any $i < j$ such that  $ 2 \log q_{k_{i-1} +1} < \log q_{k_j +1 }$, 
\begin{multline*}
\frac{\log q_{k_i +1} - \log q_{k_{i-1}+1}}{\log q_{k_i}} + \frac{\log q_{k_{i+1} +1} - \log q_{k_{i}+1}}{\log q_{k_{i+1}}} + \dots + \frac{\log q_{k_j +1} - \log q_{k_{j-1}+1}}{\log q_{k_j}} \\
\ge 
\frac{\log q_{k_j +1} - \log q_{k_{i-1}+1}}{\log q_{k_j}} > \frac 12  \frac{\log q_{k_j +1}}{\log q_{k_j}} > \frac 12,
\end{multline*}
we have
$$\sum_{k=M}^\infty \min(\alpha_k, \beta_k) = \infty.$$
Combining with (\ref{akbk}),
we have $\sum_{k=M}^\infty \log \left( \min ( \varphi(q_k), q_{k+1}/q_k) \right) /{\varphi(q_k)} = \infty$ and 
Theorem~\ref{thm} implies that \eqref{liminf}.
Hence, we prove the second assertion.
\end{proof}


\begin{proposition}\label{prop2}
If $\theta$ is an irrational such that 
$$ \sum_{k=2}^\infty \frac{1}{\log q_k} < \infty, $$
then there is a monotone increasing $\varphi(n)$ such that 
$\sum_{n = 1}^\infty \frac{1}{n \varphi (n) } = \infty$
and 
$$ \liminf_{n \to \infty} n \varphi(n) \cdot \| n \theta - s \| = \infty \quad \text{  almost every } s . $$
However, the converse is not true.
\end{proposition}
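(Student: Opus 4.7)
My plan is to handle the two claims separately, by explicit construction of $\varphi$ in the forward direction and by an explicit counterexample $\theta$ for the converse.

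For the forward implication, assuming $\sum_{k \ge 2} 1/\log q_k < \infty$, I set $v_k := (\log q_k)(\log\log q_k)$ for $k$ large enough that $\log q_k > e$, and define $\varphi$ as the step function $\varphi(n) := v_k$ on $[q_k, q_{k+1})$ (patched by a small constant on the initial segment to remain monotone; monotonicity of $v_k$ is immediate since $x \log x$ is increasing for $x > 1/e$). A direct estimate $\log v_k \le 2 \log \log q_k$ gives
\[
\frac{\log \min(\varphi(q_k), q_{k+1}/q_k)}{\varphi(q_k)} \le \frac{\log v_k}{v_k} \le \frac{2}{\log q_k},
\]
which is summable by hypothesis; Theorem~\ref{thm} then yields $\liminf n\varphi(n)\|n\theta - s\| = \infty$ a.e. For the divergence of $\sum 1/(n\varphi(n))$, writing $L_k := \log q_k$, the inequality $(L_{k+1}-L_k)/(L_k \log L_k) \ge \int_{L_k}^{L_{k+1}} dx/(x \log x)$ (valid because $1/(x \log x)$ is decreasing for $x > 1/e$) yields
\[
\sum_n \frac{1}{n \varphi(n)} \ge \sum_k \frac{L_{k+1} - L_k}{L_k \log L_k} \ge \int_{L_{k_0}}^\infty \frac{dx}{x \log x} = \infty.
\]

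For the converse failure, I would build $\theta$ via its partial quotients: fix rapidly growing sequences $(s_j)_{j \ge 1}$ of integer indices and $(A_j)_{j \ge 1}$ of positive integers, and set $a_{s_j} := A_j$, $a_k := 1$ for $k \notin \{s_j\}$. Between jumps, $q_{k+1}/q_k$ lies in a bounded interval around the golden ratio so $\log q_k$ grows linearly; at a jump, $\log(q_{s_j}/q_{s_j-1}) \approx \log A_j =: H_j$. Taking the stretch lengths $M_j := s_j - s_{j-1}$ doubly exponential in $j$ makes the long flat stretches dominate and keeps $\sum 1/\log q_k = \infty$. I then set $\varphi$ to be piecewise constant with value $\Phi_j$ on the block $[q_{s_{j-1}}, q_{s_j})$, monotone in $j$. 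The construction reduces to choosing parameters so that (a) $\sum M_j/\Phi_j < \infty$ (non-jump contribution to \eqref{condition}), (b) $\sum (\log \Phi_j)/\Phi_j < \infty$ (jump contribution to \eqref{condition}), and (c) $\sum H_j/\Phi_j = \infty$ (jump contribution to $\sum 1/(n\varphi(n))$, using the block estimate $\sum_n 1/(n\varphi(n)) \ge \sum_j \log(q_{s_j}/q_{s_{j-1}})/\Phi_j$). A workable choice is $M_j = 2^{2^j}$, $\Phi_j = M_j j^2$, $H_j = M_j j$, $A_j = \lceil e^{H_j} \rceil$: (a) and (b) both reduce to summability of $\sum 1/j^2$, while (c) becomes divergence of $\sum 1/j$.

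The hard part is precisely this balancing in the converse. Because non-jump indices contribute proportionally (up to a bounded constant) to both \eqref{condition} and $\sum 1/(n\varphi(n))$, the entire separation between convergence and divergence must come from the sparse jump indices, which forces $H_j \gg \log \Phi_j$. Making $H_j$ very large threatens $\sum 1/\log q_k = \infty$, since each jump adds $H_j$ to the cumulative $\log q_{s_j}$; the role of the doubly exponential spacing $M_j = 2^{2^j}$ is exactly to drown this cost in the linear-growth contribution of the long non-jump stretches, guaranteeing $M_j \gg \log q_{s_{j-1}}$ and hence a block contribution of order one to $\sum 1/\log q_k$. Verifying conditions (a)--(c) together with $\sum 1/\log q_k = \infty$ for the explicit parameter choice above is then a routine computation.
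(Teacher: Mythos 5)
Your proof is correct, and the forward direction is essentially the paper's: the paper takes $\varphi(n) = \log n \cdot \log\log n$ directly, while you take the step function $\varphi(n) = (\log q_k)(\log\log q_k)$ on $[q_k, q_{k+1})$, but the bound $\log\varphi(q_k)/\varphi(q_k) \le 2/\log q_k$ and the appeal to Theorem~\ref{thm} are the same. (You are slightly more careful in verifying $\sum 1/(n\varphi(n)) = \infty$ via the integral comparison; the paper leaves that standard fact implicit.)

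The converse is where you take a genuinely different route. The paper exhibits an irrational with $a_k \sim k^{\log\log k}$, which makes $\log q_k \sim k\log k\log\log k$, just barely inside the divergent regime for $\sum 1/\log q_k$, and then takes $\varphi(n) = \log n \cdot \log\log n \cdot \log\log\log n$: the extra iterated-log factor keeps $\sum 1/(n\varphi(n))$ divergent while pushing the summand in \eqref{condition} down to roughly $1/(k\log k(\log\log k)^2)$, which converges. This is a compact ``one rung down the iterated-log ladder'' argument. You instead build a sparse-spike continued fraction ($a_k = 1$ away from doubly-exponentially spaced indices $s_j$, and $a_{s_j} = A_j$ enormous) with $\varphi$ piecewise constant on blocks, reducing everything to the three explicit parameter conditions (a)--(c) plus divergence of $\sum 1/\log q_k$. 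This is heavier on bookkeeping but is structurally illuminating: it makes transparent that the non-jump indices contribute comparably to both \eqref{condition} and $\sum 1/(n\varphi(n))$, so the entire decoupling must come from the jump indices, where $\log(q_{k+1}/q_k)$ can vastly exceed $\log\varphi(q_k)$, and that the doubly-exponential spacing is exactly what protects $\sum 1/\log q_k = \infty$ against the huge jumps. One minor inaccuracy: your ``block contribution of order one to $\sum 1/\log q_k$'' actually grows like $2^j$ per block for the parameters you chose; this only strengthens the divergence, so it is harmless. Both constructions verify the claim; the paper's is shorter, yours better isolates the mechanism.
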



\begin{proof}[Proof of Proposition~\ref{prop2}]
Let $\varphi(n) = \log n \cdot \log (\log n)$ for  large $n$.
Then for some $M$ we have 
\begin{equation*}\begin{split}
\sum_{k=M}^\infty \frac{\log \left( \min ( \varphi(q_k), q_{k+1}/q_k) \right) }{\varphi(q_k)} 
&\le \sum_{k=M}^\infty \frac{\log \varphi(q_k) }{\varphi(q_k)} \\
&= \sum_{k=M}^\infty \frac{\log (\log q_k) + \log ( \log (\log q_k)) }{\log q_k \cdot \log (\log q_k)} \\
&< \sum_{k=M}^\infty \frac{ 2}{\log q_k} < \infty.
\end{split}\end{equation*}
By Theorem~\ref{thm}, we complete the first assertion.

Let $\theta$ be an irrational with partial quotients
$$a_k \sim k^{\log(\log k)} \qquad \left( \text{ i.e., } \frac{k^{\log(\log k)}}{a_k} \to 1 \text{ as } k \to \infty\right).$$
Then we have
\begin{equation*} \log q_k \sim \sum_{i=1}^k (\log i )(\log(\log i)) \sim k (\log k ) (\log(\log k)),\end{equation*}
which yields
$$ \sum_{k=0}^\infty \frac{1}{\log q_k} = \infty.$$
However, if we choose $\varphi(n) = \log n \cdot \log (\log n) \cdot \log (\log (\log n)) $ for large $n$, then  we have for some $N$
\begin{equation*}\begin{split}
\sum_{k=N}^\infty \frac{\log \left( \min ( \varphi(q_k), q_{k+1}/q_k) \right) }{\varphi(q_k)} 
&\le \sum_{k=N}^\infty \frac{\log \varphi(q_k) }{\varphi(q_k)} 
\le \sum_{k=N}^\infty \frac{3}{\log q_k \cdot \log (\log (\log q_k))} < \infty 
\end{split}\end{equation*}
since 
$$
\log q_k \cdot \log (\log (\log q_k)) \sim k (\log k ) (\log(\log k))^2.
$$
Hence, the condition of $\sum_k 1/ \log q_k < \infty$ in Proposition~\ref{prop2} is not a necessary condition.
\end{proof}

\section{Proof of the main Theorem}\label{proof}

In this section, we give the proof of the main theorem.
Let $B(x,r)$ be the ball centered at $x$ with radius $r$. 
We denote $\mu$ the Lebesgue measure on the unit circle.
We assume that $\varphi(n) \ge 4$.

Denote
$$ E_k := \bigcup_{q_k < n \le q_{k+1}} B\left( n\theta, \frac{1}{n\varphi(n)} \right).$$
Then we have
\begin{equation}\label{bnek}
\bigcap_{N \ge 1} \bigcup_{n \ge N} B \left( n\theta,\frac{1}{n\varphi(n)} \right) = \bigcap_{K \ge 1} \bigcup_{k \ge K} E_k. 
\end{equation}

Since $\| n \theta - (n-q_k) \theta \| = \|q_k \theta \|$ and $\varphi(n)$ is monotone increasing, we have for each $q_{k} < n \le q_{k+1}$
$$
\mu  \left( B \left(n\theta, \frac{1}{n\varphi(n)}\right) \setminus B \left((n-q_k)\theta, \frac{1}{(n-q_k)\varphi(n-q_k)} \right) \right)  \le \| q_k \theta \|.
$$
Thus, we have 
\begin{equation*}
\begin{split}
\mu(E_{k}) &\le \sum_{n= q_k+1}^{2q_{k}} \mu \left(  B \left (n\theta, \frac{1}{n\varphi(n)} \right) \right) \\
&\quad + \sum_{n= 2q_k+1}^{q_{k+1}} \mu \left( B \left(n\theta, \frac{1}{n\varphi(n)}\right) \setminus B \left((n-q_k)\theta, \frac{1}{(n-q_k)\varphi(n-q_k)} \right) \right) \\
&\le \sum_{n= q_k+1}^{2q_{k}} \frac{2}{n\varphi(n)} + \sum_{n= 2q_k+1}^{q_{k+1}}  \min \left(\| q_k \theta \|, \frac{2}{n\varphi(n)} \right).
\end{split}
\end{equation*}

Therefore, we have
\begin{equation}\label{lll1}
\begin{split}
\mu(E_{k}) 
&\le \sum_{n = q_{k}+1}^{q_{k+1}}\frac{2}{n\varphi(n)} \le \int_{q_{k}}^{q_{k+1}} \frac{2 dx }{x \varphi(x)} 
= \int_{\log q_{k}}^{\log q_{k+1}} \frac{2 dt}{\varphi(e^{t})}  
\le \frac{2 \log (q_{k+1}/q_k)}{\varphi(q_{k})}. 
\end{split}
\end{equation}
If $\varphi(q_k) q_{k} < q_{k+1}$, then we have
\begin{equation}\label{lll2}
\begin{split}
\mu(E_{k}) 
&\le \frac{2}{\varphi(q_k)} + \sum_{n= 2q_k+1}^{\lceil q_{k+1}/\varphi(q_k) \rceil } \| q_k \theta \| + \sum_{n= \lceil q_{k+1}/\varphi(q_k) \rceil +1 }^{q_{k+1}}  \frac{2}{n \varphi (n)} \\
&\le \frac{2}{\varphi(q_k)} + \left( \left \lceil \frac{q_{k+1}}{\varphi(q_k)} \right \rceil - 2 q_k \right)\| q_k \theta \| 
+ \int_{q_{k+1}/\varphi(q_k)}^{q_{k+1}} \frac{2 dx }{x \varphi(x)} \\
&< \frac{2}{\varphi(q_k)} + \frac{q_{k+1}  \| q_k \theta \| }{\varphi(q_k)} + \int_{\log(q_{k+1}/\varphi(q_k))}^{\log(q_{k+1})} \frac{2dt}{\varphi(e^{t})} \\ 
&< \frac{3}{\varphi(q_{k})} + \frac{2\log \varphi(q_k)}{\varphi(q_{k+1}/\varphi(q_k))} 
 \le \frac{3}{\varphi(q_{k})} + \frac{2\log \varphi(q_k)}{\varphi(q_{k})}. 
\end{split}
\end{equation}
By (\ref{lll1}) and (\ref{lll2}),
$$ \sum_{k=0}^\infty \mu (E_{k}) \le \left( \frac{3}{\log \varphi(1)} + 2 \right) \sum_{k= 0}^\infty \frac{\log \left( \min ( \varphi(q_k), q_{k+1}/q_k) \right) }{\varphi(q_k)} < \infty, $$ 
Therefore, by the Borel-Cantelli Lemma and \eqref{bnek}, the proof of \lq only if ' part is obtained.


Let 
$$ q^*_k = \begin{cases} \max \{ n \ge q_{k} \, | \, n \varphi(n) < q_{k+1} \}, &\text{ if }   q_{k} \varphi(q_{k}) < q_{k+1},  \\
q_{k}, &\text{ if }  q_{k} \varphi(q_{k}) \ge q_{k+1}.\end{cases}$$
By the assumption $\varphi(n) \ge 4$, we have $q_{k} \le q^*_k < q_{k+1}$.

Let 
$$ F_k := \bigcup_{q_{k-1} < n \le q_{k+1}} B\left( n\theta, \frac{1}{n\varphi(n)} \right)$$
and put 
$$ b_{k+1} = \left \lceil \frac{q_{k+1} - q^*_k}{q_{k}} \right \rceil, \qquad  1 \le b_{k+1} \le a_{k+1}$$
and for $0 \le i < b_{k+1}$
$$G_{k,i} := \bigcup_{q_{k+1} - (i+1)q_{k} < n \le q_{k+1} - iq_{k}} B\left( n\theta, \frac{1}{ 8(q_{k+1} - iq_{k}) \varphi(q_{k+1})} \right).$$
Then we have
$$G_k := \bigcup_{i = 0}^{b_{k+1}-1} G_{k,i} \subset F_k.$$

Since 
$$
b_{k+1} < \frac{q_{k+1} - q^*_k}{q_{k}} +1,
$$
each ball in $G_k$ has radius at most 
\begin{align*}
\frac{1}{ 8(q_{k+1} - (b_{k+1}-1) q_{k}) \varphi(q_{k+1})} &\le \frac{1}{ 8 q^*_{k} \varphi(q_{k+1})}
< \frac{1}{ 4 (q^*_{k}+1) \varphi(q^*_{k} +1)} \\
&\le \frac{1}{4 q_{k+1}} < \frac{\|q_k \theta \|}{2}.
\end{align*}
Since the balls in $G_k$ are distanced at least by $\|q_k \theta \|$, 
we have 
$$ \mu(G_{k,i}) = \frac{q_k}{ 8(q_{k+1} - i q_{k}) \varphi (q_{k+1} ) }, \qquad
\mu(G_{k}) = \frac{q_k}{8\varphi(q_{k+1})} \sum_{i = 0}^{b_{k+1}-1} \frac{1}{q_{k+1} - i q_{k}}.
$$
Since $$ \sum_{i=0}^{m} \frac{1}{q - ic} = \frac{1}{q - mc} + \dots + \frac{1}{q}
\ge \int_{q - mc}^{q+c} \frac 1x \frac{dx}{c} = \frac 1c \log \frac{q+c}{q-mc},$$
we have
\begin{equation}\label{ineq0}
\mu(G_{k}) \ge \frac{1}{8\varphi(q_{k+1})} \log \frac{q_{k+1}+q_{k}}{q_{k+1}-(b_{k+1}-1)q_{k}}
\ge 
\frac{1}{8\varphi(q_{k+1})} \log \frac{q_{k+1}+q_{k}}{q_{k}+q^*_k}. 
\end{equation}

If $q^*_{k} = q_{k}$, then $b_{k+1} = a_{k+1}$, thus we have
\begin{equation}\label{ineq2}
\mu(G_{k}) \ge \frac{1}{8\varphi(q_{k+1})} \log \frac{q_{k+1}+q_{k}}{q_{k+1}-(b_{k+1}-1)q_{k}} 
\ge \frac{1}{8\varphi(q_{k+1})} \log \frac{q_{k+1}+q_k}{ q_k + q_{k-1}}.
\end{equation}

If $q_k \varphi(q_k) < q_{k+1}$, 
then we have from $\varphi(q^*_k) \ge 4$ 
\begin{equation}\label{ineq1}
\mu(G_{k}) 
\ge \frac{1}{8\varphi(q_{k+1})} \log \frac{q_{k+1}+q_{k}}{q_{k}+q^*_k}
\ge \frac{1}{8\varphi(q_{k+1})} \log \frac{\varphi(q^*_k)}{2}
\ge \frac{1}{16} \frac{\log \varphi(q^*_k)}{\varphi(q_{k+1})}. 
\end{equation}

\begin{lemma}
If
$$\sum_{k=0}^\infty \frac{ \log ( \min (\varphi (q_k), q_{k+1}/q_k) )}{ \varphi( q_{k} ) } = \infty,$$
then $$\sum_{k=0}^\infty \mu(G_k) = \infty.$$
\end{lemma}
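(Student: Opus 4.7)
The plan is to combine the two lower bounds \eqref{ineq1} and \eqref{ineq2} into a single uniform estimate of the form
$$\mu(G_k) \ge c \cdot \frac{\log \left( \min ( \varphi(q_k), q_{k+1}/q_k) \right)}{\varphi(q_{k+1})}$$
for some absolute constant $c > 0$, and then invoke Proposition~\ref{qk} to transfer the divergence hypothesis from $\varphi(q_k)$ to $\varphi(q_{k+1})$.

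I would split the indices $k$ into two regimes according to the definition of $q_k^*$. In the regime $q_k \varphi(q_k) < q_{k+1}$ (so $q_k^* > q_k$ and $\min ( \varphi(q_k), q_{k+1}/q_k) = \varphi(q_k)$), the monotonicity of $\varphi$ together with $q_k^* \ge q_k$ gives $\log \varphi(q_k^*) \ge \log \varphi(q_k)$, so \eqref{ineq1} immediately delivers the desired bound with $c = 1/16$.

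The harder regime is $q_k \varphi(q_k) \ge q_{k+1}$ (so $q_k^* = q_k$ and $\min ( \varphi(q_k), q_{k+1}/q_k) = q_{k+1}/q_k$), where I would work from \eqref{ineq2}. The key step is to show that the logarithmic term on the right-hand side is comparable to $\log(q_{k+1}/q_k)$ \emph{uniformly}, including when $q_{k+1}/q_k$ is close to $1$. Setting $x = q_{k+1}/q_k \ge 1$ and using $q_{k-1} < q_k$, one has
$$\frac{q_{k+1}+q_k}{q_k+q_{k-1}} \ge \frac{x+1}{2} \ge \sqrt{x},$$
where the second inequality is the elementary estimate $(\sqrt{x}-1)^2 \ge 0$. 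Taking logarithms gives $\log \frac{q_{k+1}+q_k}{q_k+q_{k-1}} \ge \tfrac{1}{2}\log(q_{k+1}/q_k)$, which together with \eqref{ineq2} delivers $\mu(G_k) \ge \tfrac{1}{16} \log(q_{k+1}/q_k)/\varphi(q_{k+1})$ in this regime as well.

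Having combined both regimes into a single uniform lower bound on $\mu(G_k)$, I would apply Proposition~\ref{qk} to convert the hypothesis $\sum_k \log ( \min )/\varphi(q_k) = \infty$ into $\sum_k \log ( \min )/\varphi(q_{k+1}) = \infty$, yielding $\sum_k \mu(G_k) = \infty$. The main obstacle I anticipate is precisely the second regime: a naive estimate such as $\log((q_{k+1}+q_k)/(q_k+q_{k-1})) \ge \log(q_{k+1}/q_k) - \log 2$ can be negative for small $q_{k+1}/q_k$ (e.g., when $a_{k+1} = 1$ and $q_{k-1}/q_k$ is tiny), so the refinement via $(x+1)/2 \ge \sqrt{x}$ is essential to obtain a clean comparison that survives across all partial quotient sizes.
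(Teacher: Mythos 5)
Your proof is correct, and it takes a genuinely cleaner route than the paper's. Both proofs start from the same three ingredients: Proposition~\ref{qk} to move the denominator from $\varphi(q_k)$ to $\varphi(q_{k+1})$, inequality \eqref{ineq1} on $\Delta = \{k : q_k\varphi(q_k) < q_{k+1}\}$, and inequality \eqref{ineq2} on $\Delta^c$. The difference lies entirely in how $\Delta^c$ is handled. The paper first dichotomizes according to which of $\sum_{k\in\Delta}$ or $\sum_{k\in\Delta^c}$ diverges, and in the latter case it does not bound $\mu(G_k)$ term by term; instead it groups consecutive indices in $\Delta^c$ into maximal blocks anchored by an index $k-1 \in \Delta$, compares two step functions $f \ge g$, and uses an integral-telescoping argument to show that the block sum $\mu(G_{k-1}) + \cdots + \mu(G_{k+m})$ dominates $\sum_{j=k}^{k+m}\log(q_{j+1}/q_j)/(8\varphi(q_{j+1}))$. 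This is necessary for the paper because the naive pointwise bound $\log\tfrac{q_{k+1}+q_k}{q_k+q_{k-1}} \ge \log(q_{k+1}/q_k)$ fails (it would require $q_k^2 \ge q_{k+1}q_{k-1}$). Your observation that $\tfrac{q_{k+1}+q_k}{q_k+q_{k-1}} \ge \tfrac{x+1}{2} \ge \sqrt{x}$, where $x = q_{k+1}/q_k$, recovers half of the logarithm pointwise, namely $\log\tfrac{q_{k+1}+q_k}{q_k+q_{k-1}} \ge \tfrac12\log(q_{k+1}/q_k)$, which is all you need. This gives the uniform bound $\mu(G_k) \ge \tfrac{1}{16}\log\bigl(\min(\varphi(q_k), q_{k+1}/q_k)\bigr)/\varphi(q_{k+1})$ for every $k$, makes the case dichotomy unnecessary, and sidesteps the anchoring requirement $k-1 \in \Delta$ (a point the paper's block argument relies on implicitly and which becomes delicate if $\Delta$ is finite). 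The trade-off is a harmless factor of $\tfrac12$ in the constant.
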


\begin{proof}
By Proposition~\ref{qk} we have
$$\sum_{k=0}^\infty \frac{ \log ( \min (\varphi (q_k), q_{k+1}/q_k) )}{ \varphi( q_{k+1} ) } = \infty.$$
Let 
$$\Delta = \{ k \ge 0 \, | \, q_k \varphi(q_k) < q_{k+1} \}. $$
Then either
$$\sum_{k \in \Delta} \frac{ \log ( \min (\varphi (q_k), q_{k+1}/q_k) )}{ \varphi( q_{k+1} ) } = \sum_{k \in \Delta}  \frac{ \log \varphi (q_k)}{ \varphi( q_{k} ) } = \infty$$
or
$$\sum_{k \in \Delta^c} \frac{ \log ( \min (\varphi (q_k), q_{k+1}/q_k) )}{ \varphi( q_{k+1} ) } = \sum_{k \in \Delta^c} \frac{ \log (q_{k+1}/q_k )}{ \varphi( q_{k} ) } = \infty.$$

If $\sum_{k \in \Delta}  \log \varphi (q_k) / \varphi( q_{k} ) = \infty$, then 
by \eqref{ineq1}, we have
$$ \sum_{k=0}^\infty \mu(G_k) \ge \sum_{k \in \Delta} \mu(G_k) \ge \frac{1}{16} \sum_{k \in \Delta} \frac{\log \varphi(q^*_k)}{\varphi(q_{k+1})} = \infty.$$

Suppose that 
$$\sum_{k \in \Delta^c} \frac{ \log (q_{k+1}/q_k )}{ \varphi( q_{k+1} ) } = \infty.$$ 
If $k -1 \in \Delta$ and $k,k+1,\dots,k+m \in \Delta^c$, then by \eqref{ineq0} and \eqref{ineq2},
\begin{multline*}
\mu(G_{k-1}) + \mu(G_{k}) + \dots + \mu(G_{k+m})
\ge 
 \frac{1}{8\varphi(q_{k})} \log \frac{q_{k}+q_{k-1}}{ q_{k-1} + q^*_{k-1}} \\
+ \frac{1}{8\varphi(q_{k+1})} \log \frac{q_{k+1}+q_k}{ q_k + q_{k-1}}
+ \dots
+ \frac{1}{8\varphi(q_{k+m+1})} \log \frac{q_{k+m+1}+q_{k+m}}{ q_{k+m} + q_{k+m-1}}.
\end{multline*}

For $k-1 \le i \le k+m$, let
\begin{align*}
f(x) &= \frac{1}{8\varphi(q_{i+1})}, & \log (q_{i}+q_{i-1}) \le x < \log (q_{i+1}+q_{i}), \\
g(x) &= \frac1{8\varphi(q_{i+1})},  &\log (q_{i}) \le x < \log (q_{i+1}).
\end{align*}
Then $$f(x) \ge g(x) \ \text{ for } \ \log q_k \le x < \log q_{k+m+1}.$$
Since $k-1 \in \Delta$, we have $q_{k-1} \varphi(q_{k-1}) \le q^*_{k-1} \varphi(q^*_{k-1}) < q_k$.
By the assumption $\varphi(q_{k-1}) \ge 4$, we have $q_{k-1} + q^*_{k-1} < q_k$.
Therefore, we have
\begin{equation*}
\begin{split}
\mu(G_{k-1}) + \mu(G_{k}) + \dots + \mu(G_{k+m})
&= \int_{\log (q_{k-1} + q^*_{k-1})}^{\log (q_{k+m+1}+q_{k+m})} f(x) dx \\
&\ge \int_{\log q_{k}}^{\log q_{k+m+1}} g(x) dx \\
&=\frac{\log (q_{k+1}/q_k)}{8\varphi(q_{k+1})}
+ \dots + \frac{\log (q_{k+m+1}/q_{k+m})}{8\varphi(q_{k+m+1})} . 
\end{split}
\end{equation*}
Hence, 
\begin{equation*}
 \sum_{k=0}^\infty \mu(G_k) \ge \sum_{k \in \Delta^c} \frac{\log (q_{k+1}/q_{k})}{8 \varphi(q_{k+1})} = \infty. \qedhere
\end{equation*}

\end{proof}

Now we estimate $\mu(G_\ell \cap G_k)$, $\ell< k$ by the Denjoy-Koksma inequality (see e.g., \cite{Herman}): Let $T$ be an irrational rotation by $\theta$ and $f$ be a real valued function of bounded variation on the unit interval. Then for any $x$ we have
\begin{equation}\label{Koksma} 
\left| \sum_{n=0}^{q_k -1} f(T^n x) - q_k \int f \, d\mu \right | < \text{\rm var} (f). \end{equation}

For a given interval $I$, by the Denjoy-Koksma inequality (\ref{Koksma}) we have
\begin{equation*}
\# \left\{ 0 \le n < q_{k} \, | \, n\theta \in I \right\} = \sum_{n=0}^{q_{k}-1} 1_{I} (T^n x) < q_{k} \mu (I) + 2.
\end{equation*}
Since $G_{k,i}$ consists of the intervals of centered at $q_k$ orbital points 
 with radius $(8(q_{k+1} - iq_{k}) \varphi(q_{k+1}))^{-1}$, we have for each $i$ 
$$ \mu \left( G_{k,i} \cap I \right) 
< \left( q_{k} \mu(I) + 3 \right) \frac{1}{4(q_{k+1} - i q_{k}) \varphi(q_{k+1})}
= \mu(G_{k,i}) \mu(I) + \frac{3}{q_k} \mu (G_{k,i}).$$

Note $G_\ell$ consists of at most $q_{\ell+1}$ intervals.

Therefore, we have for $k > \ell$
\begin{equation*}
\mu( G_{k,i} \cap G_\ell ) 
< \mu( G_{k,i} ) \mu( G_\ell ) + \frac{3 q_{\ell+1}}{q_k} \mu(G_{k,i}). 
\end{equation*}

Since $G_k = \cup G_{k,i}$ by a disjoint union, we have
\begin{equation*}\begin{split}
\mu( G_{k} \cap G_\ell ) &< \mu( G_{k} ) \mu( G_\ell ) + \frac{3 q_{\ell+1}}{q_k} \mu(G_{k}) \\
&< \mu( G_{k} ) \mu( G_\ell ) + 3 \left(\frac{1}{2}\right)^{\lfloor (k-\ell-1)/2 \rfloor} \mu(G_{k}) \\
&\le \mu( G_{k} ) \mu( G_\ell ) + \frac{6}{2^{(k-\ell)/2}} \mu(G_{k}). 
\end{split}
\end{equation*}

We need a version of Borel-Cantelli lemma (e.g. \cite {Sp}) to go further:
\begin{lemma}\label{Sp}
Let $(\Omega, \mu)$ be a measure space,
let $f_k(\omega)$ $(k=1,2,\dots)$ be a sequence of nonnegative $\mu$-measurable functions, and let $f_k, \varphi_k$ be sequences of real numbers such that
$$ 0 \le f_k \le \varphi_k \le 1 \qquad (k = 1, 2, \dots).$$
Suppose that
$$ \int_\Omega \left( \sum_{m < k \le n} f_k (\omega) - \sum_{m < k \le n} f_k \right)^2 d\mu \le C \sum_{m < k \le n} \varphi_k$$
for arbitrary integers $m$, $n$ ($m<n$). Then
$$ \sum_{1 \le k \le n } f_k (\omega) = \sum_{1 \le k \le n} f_k + O( \Phi^{1/2}(n) \ln^{3/2+\varepsilon} \Phi (n))$$
for almost all $\omega \in \Omega$, where $\varepsilon >0$ is arbitrary and $\Phi(n) = \sum_{1 \le k \le n} \varphi_k$.
\end{lemma}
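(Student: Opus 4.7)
The plan is to combine Chebyshev's inequality with a Rademacher--Menchov style maximal inequality, following the classical Gal--Koksma--Sprindzuk framework. Set
\[
T_n(\omega) := \sum_{1 \le k \le n} \bigl( f_k(\omega) - f_k \bigr),
\]
so the hypothesis reads $\int (T_n - T_m)^2 \, d\mu \le C(\Phi(n) - \Phi(m))$ for all $m < n$. If $\Phi$ is bounded, then $\sum_k f_k(\omega) < \infty$ almost surely by monotone convergence and the conclusion is vacuous; assume henceforth that $\Phi(n) \to \infty$.

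First I control $T_n$ along a sparse subsequence. Pick $N_j$ with $\Phi(N_j) \in [2^j, 2^{j+1})$. By Chebyshev,
\[
\mu \Bigl( |T_{N_j}| > \Phi(N_j)^{1/2} j^{(1+\delta)/2} \Bigr) \le \frac{C \, \Phi(N_j)}{\Phi(N_j) \, j^{1+\delta}} = \frac{C}{j^{1+\delta}},
\]
which is summable in $j$ for any fixed $\delta > 0$. Borel--Cantelli therefore gives, almost surely, $|T_{N_j}| = O\bigl(\Phi(N_j)^{1/2} (\log \Phi(N_j))^{(1+\delta)/2}\bigr)$ for all large $j$.

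To extend the bound from the subsequence to all $n$, I control the block maxima $M_j := \max_{N_j < n \le N_{j+1}} |T_n - T_{N_j}|$ by a Rademacher--Menchov dyadic-bisection argument: recursively halve the index set of the block and apply Minkowski's inequality to the $L^2$-norms of the partial sums at each of the $O(\log R_j)$ scales, where $R_j$ denotes the effective block size. This yields $\int M_j^2 \, d\mu \le C (\log R_j)^2 \, \Phi(N_j)$; a second round of Chebyshev and Borel--Cantelli then delivers $M_j = O\bigl( \Phi(N_j)^{1/2} (\log R_j)(\log \Phi(N_j))^{(1+\delta)/2} \bigr)$ almost surely. Combined with the previous step, this produces the required bound $|T_n| = O(\Phi(n)^{1/2} \log^{3/2+\varepsilon} \Phi(n))$, provided one can arrange $\log R_j = O(\log \Phi(N_j))$.

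The principal obstacle is precisely this comparison: since $\varphi_k$ is assumed only bounded above by $1$, the index gap $N_{j+1} - N_j$ may far exceed $\Phi(N_j)$, and a naive application of the maximal inequality would contaminate the log-exponent. The remedy exploits the pathwise estimate $|T_n - T_m| \le \Phi(n) - \Phi(m)$, which follows immediately from $0 \le f_k(\omega), f_k \le \varphi_k$: it allows one to replace the block by the sub-collection of indices at which $\Phi$ advances by at least, say, $\Phi(N_j)^{-1/2}$, a collection of cardinality at most $\Phi(N_j)^{3/2}$, while changing $M_j$ by at most $O(1)$. The Rademacher--Menchov estimate then runs over $O(\log \Phi(N_j))$ scales, and adjusting $\delta$ yields the claimed exponent $3/2+\varepsilon$.
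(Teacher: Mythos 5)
The paper does not prove this lemma: it is stated as a known result and cited from Sprind\v{z}uk's \emph{Metric Theory of Diophantine Approximations} (this is essentially the Gal--Koksma--Sprind\v{z}uk theorem, also treated in Harman's \emph{Metric Number Theory}, Chapter~1). So there is no in-paper proof to compare against; I can only assess your argument on its own.

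Your outline is the standard and correct route. Writing $T_n(\omega)=\sum_{k\le n}(f_k(\omega)-f_k)$ and choosing indices $N_j$ with $\Phi(N_j)\in[2^j,2^{j+1})$, Chebyshev plus Borel--Cantelli give $|T_{N_j}|=O\bigl(\Phi(N_j)^{1/2}(\log\Phi(N_j))^{(1+\delta)/2}\bigr)$ a.e.\ along the subsequence. For the block maxima you then invoke the Rademacher--Menchov/M\'oricz dyadic maximal inequality, exploiting that $\int(T_n-T_m)^2\,d\mu\le C(\Phi(n)-\Phi(m))$ is superadditive in $(m,n]$. You correctly flag the one genuine pitfall: the cardinality of a block, and hence the number of bisection scales, is \emph{not} controlled by $\Phi$, so a naive application would give $\log(N_{j+1}-N_j)$ rather than $\log\Phi(N_j)$ in the exponent. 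Your remedy — use the pathwise bound $|T_n-T_m|\le\Phi(n)-\Phi(m)$ (valid since $|f_k(\omega)-f_k|\le\varphi_k$) to prune the block to $O(\Phi(N_j)^{3/2})$ representative indices at which $\Phi$ has advanced by at least $\Phi(N_j)^{-1/2}$, at the cost of an $O(1)$ additive error in the maximum — is exactly the right fix, and is equivalent to the usual device of bisecting according to $\Phi$-level rather than index. The arithmetic of exponents then closes: Chebyshev on $M_j$ with threshold $\Phi(N_j)^{1/2}(\log\Phi(N_j))^{(3+\delta)/2}$ is summable, and since $\Phi(n)\asymp\Phi(N_j)$ on the block, adjusting $\delta$ yields the stated $\Phi(n)^{1/2}(\log\Phi(n))^{3/2+\varepsilon}$. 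One small stylistic remark: the reduction ``if $\Phi$ is bounded the conclusion is vacuous'' is fine in substance, but ``by monotone convergence'' undersells it — you should instead observe that the $L^2$ bound on $T_n$ together with $\sum f_k<\infty$ forces $T_n$ to converge a.e., so the difference is $O(1)$. Overall the proposal is sound and matches the classical proof.
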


Put $f_k = \varphi_k = \mu(G_k)$ and $f_k (x) = 1_{G_k} (x)$ in Lemma~\ref{Sp}.
Then we have for any $m<n$
\begin{equation*}
\begin{split}
&\int \left( \sum_{m < k \le n} f_k (\omega) - \sum_{m < k \le n} f_k \right)^2 d\mu \\
&\le 2 \sum_{m < \ell < k \le n} \left( \mu ( G_k \cap G_{\ell} ) - \mu (G_k) \mu (G_\ell) \right)  +  \sum_{m < k \le n} \mu(G_k) \\
&< 2 \sum_{m < k \le n}  \sum_{m < \ell < k} \frac{6}{2^{(k-\ell)/2}} \mu (G_k) + \sum_{m < k \le n} \mu(G_k) 
< \left( \frac{12}{\sqrt{2} -1} +1 \right) \sum_{m < k \le n} \mu (G_k).
\end{split}
\end{equation*}
Therefore, by Lemma~\ref{Sp}, 
if 
$$ \sum_k \mu (G_k) = \infty,$$
then  we have for almost every $x$ 
$$\sum_{k=1}^\infty 1_{G_k} (x) = \infty$$
or 
$$ x \in G_k \subset F_k \text{ infinitely many $k$'s}.$$ 
Hence, we have the proof of Theorem~\ref{thm}.


\section*{Acknowledgments}
The author wish to thank Hitoshi Nakada, Bao-Wei Wang, Jian Xu and Bo Tan for many helpful discussion.

\end{document}